\numberwithin{equation}{section}
\theoremstyle{plain}
\newtheorem{theorem}{Theorem}[section]
\newtheorem{lemma}[theorem]{Lemma}
\newtheorem{proposition}[theorem]{Proposition}
\theoremstyle{definition}
\newtheorem{definition}[theorem]{Definition}
\newtheorem{example}[theorem]{Example}
\newtheorem{remark}[theorem]{Remark}
\DeclareMathOperator{\Con}{Con}
\DeclareMathOperator{\Epi}{Epi}
\DeclareMathOperator{\Hom}{Hom}
\DeclareMathOperator{\id}{id}
\DeclareMathOperator{\Obj}{Obj}
\DeclareMathOperator{\Quo}{Quo}
\DeclareMathOperator{\Eq}{Eq}
\newcommand{\C}{\mathbf{C}}
\newcommand{\D}{\mathbf{D}}
\newcommand{\M}{\mathcal{M}}
\newcommand{\N}{\mathcal{N}}
\newcommand{\F}{\mathcal{F}}
\newcommand{\Ll}{\mathcal{L}}
\newcommand{\G}{\mathcal{G}}
\newcommand{\LStr}{\Ll\mathbf{Str}}
\newcommand{\pSet}{\mathbf{pSet}}
\newcommand{\Q}{\mathbb{Q}}
\newcommand{\Ring}{\mathbf{Ring}}
\newcommand{\Set}{\mathbf{Set}}
\newcommand{\T}{\mathcal{T}}
\newcommand{\TMdl}{\T\mathbf{Mdl}}
\newcommand{\Z}{\mathbb{Z}}
\begin{document}

\title[Functor-quotients and isomorphism theorems]{On functor-quotients and their isomorphism theorems}

\author[J. M. Barrett]{Jordan Mitchell Barrett}
\address{School of Mathematics and Statistics\\Victoria University of Wellington\\Wellington 6140\\New Zealand}
\urladdr{http://jmbarrett.nz}
\email{math@jmbarrett.nz}

\author[V. Vito]{Valentino Vito}
\address{Department of Mathematics\\Universitas Indonesia\\Depok 16424\\Indonesia}
\email{valentino.vito@sci.ui.ac.id}

\subjclass{18A32, 08A30, 03C05, 18C05}

\keywords{Quotient, Isomorphism theorem, Congruence, Elementary class, First-order structure}

\begin{abstract}
The notion of a categorical quotient can be generalized since its standard categorical concept does not recover the expected quotients in certain categories. We present a more general formulation in the form of $\mathcal{F}$-quotients in a category $\mathbf{C}$, which are relativized to a faithful functor $\mathcal{F}\colon \mathbf{C} \to \mathbf{D}$. The isomorphism theorems of universal algebras generalize to this setting, and we additionally find important links between $\mathcal{F}$-quotients in the concrete category of first-order structures, and quotients defined for model-theoretic equivalence classes. By first working in this categorical setting, some quotient-related results for first-order structures can be naturally obtained. In particular, we are able to prove some isomorphism theorems in the context of model theory directly from their corresponding categorical isomorphism theorems.
\end{abstract}

\maketitle

\section{Introduction}

The concept of quotients in various structures have been studied extensively, and isomorphism theorems related to them have been researched in the past. Relevant research on hyperalgebras, for example, include studies on the isomorphism theorems of hyperrings~\cite{davvazring}, polygroups~\cite{davvazgroup}, hypermodules~\cite{zhan} and universal hyperalgebras~\cite{ebrahimi}. Some discussion on isomorphism theorems in other structures can also be found in~\cite{holcapekiso,iampan}.

General applications of category theory to universal algebra have been discussed in~\cite{borceux}. Also, Mousavi quite recently worked on free hypermodules using categorical techniques~\cite{mousavi}. Inspired by these approaches, we study isomorphism theorems in a more general setting by obtaining corresponding results from universal algebra~\cite{burris,cohn} derived from quotients defined by a faithful functor.

Let us fix the categorical notations and conventions that we will use in this paper. Abstract categories will generally be denoted by boldface, upright, capital letters --- $\C$, $\D$, etc. Special named categories (e.g., $\Set$, $\Ring$) are referred to in the same style. As per usual, a category $\C$ consists of a class of objects $\Obj(\C)$, whose members are denoted by italic capital letters $A, B, C$, etc, and a class of morphisms $\Hom(A,B)$ for every $A,B \in \Obj(\C)$, whose members are denoted by lowercase letters $f, g, h$, etc. We use the notation $\Hom(A,-)$ to denote the class of all morphisms with source $A$. Functors are denoted by the calligraphic capital letter $\F$.

First, recall that quotients in a category $\C$ are the dual of subobjects. Let $A \in \Obj(\C)$ and $\Epi(A,-)=\{\, f \in \Hom(A,-) \mid f \text{ is an epimorphism} \,\}$. We define a relation $\leq$ on $\Epi(A,-)$ such that $(f\colon A \to B)\ \leq\ (g\colon A \to C)$ if and only if $g$ factors through $f$ (i.e., there exists a morphism $h\colon B \to C$ such that $g=h \circ f$). The relation $\leq$ is easily seen to be a preorder, and thus it generates an equivalence relation on $\Epi(A,-)$ in the usual manner.

\begin{definition}[{\cite[p.~126]{mac lane}}]\label{def:quot-cat}
The \emph{categorical quotients} of $A \in \Obj(\C)$ are the equivalence classes of epimorphisms in $\Epi(A,-)$ under the equivalence relation $f \sim g \iff f \leq g$ and $g \leq f$.
\end{definition}

Here, categorical quotients are defined as (equivalence classes of) morphisms with source $A$, rather than objects obtained from $A$ via some quotient map. The object formulation of quotients is obtained by considering the target $B$ of an epimorphism $f\colon A \to B$. If $f\colon A \to B$ and $g\colon A \to C$ are such that $f \sim g$, then it follows directly from the definition of $\sim$ that $B$ and $C$ are isomorphic. Therefore, for any $\sim$-equivalence class $[f]_\sim$, the class $\{\, B \in \Obj(\C) \mid (g\colon A \to B) \in [f]_\sim \,\}$ is an isomorphism class of objects, which we identify with a quotient $[f]_\sim$ of $A$.

One issue with this definition is that it does not give a representative view of what quotients are in some common categories. For example, in $\Ring$, the inclusion map $i\colon\Z \to \Q$ is an epimorphism, despite not being surjective on the underlying sets. As a consequence, its equivalence class forms a categorical quotient, even though it is absurd to consider $\Q$ as a quotient ring of $\Z$.

In this paper, we define $\F$-quotients, a more general categorical definition of quotients which allows us to better capture the notion in algebraic categories such as $\Ring$. We devote Section~\ref{sec:F-quot} to developing the concept of an $\F$-quotient, and giving connections to free objects and the correspondence theorem of universal algebra. In Section~\ref{sec:elem-classes}, we shall see that the natural model-theoretic definition of quotients are essentially $\F$-quotients in the concrete category of first-order structures. Finally, using the theory developed in Sections~\ref{sec:F-quot} and~\ref{sec:elem-classes}, we give relatively simple proofs for the isomorphism theorems in the realm of first-order structures.

\section{On $\F$-quotients}\label{sec:F-quot}

Let $\F\colon \C \to \D$ be a faithful functor between categories. A morphism $f$ in $\C$ is an \emph{$\F$-epimorphism} if $\F(f)$ is an epimorphism in $\D$. Since $\F$ is faithful, any $\F$-epimorphism is necessarily an epimorphism.

\begin{definition}\label{def:F-quot}
Let $\F\colon \C \to \D$ be a faithful functor. Let $A \in \Obj(\C)$ and let
\[\Epi_\F(A,-) = \{\, f \in \Hom(A,-) \mid f \text{ is an } \F\text{-epimorphism} \,\}.\]
We define a relation $\leq$ on $\Epi_\F(A,-)$ by
\[(f\colon A \to B)\ \leq\ (g\colon A \to C)\ \iff\ g=h \circ f \text{ for some } h\colon B \to C.\]
Then, \emph{$\F$-quotients} of $A$ are equivalence classes under the equivalence relation $f \sim g \iff f \leq g$ and $g \leq f$. The class of all $\F$-quotients of A is denoted by $\Quo_\F(A)$.
\end{definition}

Since $\F$-epimorphisms are epimorphisms, thus $\Epi_\F(A,-) \subseteq \Epi(A,-)$. Therefore, the preceding definition allows us to restrict the set $\Epi(A,-)$ to a certain extent. We note that Definition~\ref{def:F-quot} generalizes the usual categorical definition of a quotient given in Definition~\ref{def:quot-cat}.

\begin{example}
Let $\D = \C$, and $\F\colon \C \to \C$ be the identity functor. The $\F$-quotients of $A \in \Obj(\C)$ are exactly the categorical quotients of $A$.
\end{example}

\begin{example}
Let $R$ be a unital ring and let $\F\colon\Ring \to \Set$ be the forgetful functor. Then $\Epi_\F(R,-)$ precisely contains surjective ring homomorphisms with $R$ as the source, in contrast to the set $\Epi(R,-)$ of all ring epimorphisms from $R$, which contains the troublesome inclusion map $i\colon\Z \to \Q$.
\end{example}

Note that the class $\Quo_\F(A)$ might be proper for large categories, although in the next section, we show that it is a set in the concrete category of first-order structures. The relation $\leq$ forms a well-defined partial order on $\Quo_\F(A)$, where for every $[f]_\sim,[g]_\sim \in \Quo_\F(A)$, we define $[f]_\sim \leq [g]_\sim \iff f\leq g$.

\begin{proposition}\label{prop:g/f-epi}
Let $\F\colon\C \to \D$ be a faithful functor and let $A \in \Obj(\C)$. If $f,g \in \Epi_\F(A,-)$ and $f \leq g$, then the induced morphism $h$ such that $g=h \circ f$ is unique. Moreover, $h$ is an $\F$-epimorphism.
\end{proposition}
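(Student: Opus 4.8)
The plan is to treat the two assertions separately, each resting on an elementary cancellation property of epimorphisms. Write $f\colon A \to B$ and $g\colon A \to C$, so that the induced morphism takes the form $h\colon B \to C$ with $g = h \circ f$.

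For uniqueness, I would exploit the fact that $f$ is an epimorphism. As noted immediately after the definition of $\F$-epimorphism, faithfulness of $\F$ guarantees that every $\F$-epimorphism is an epimorphism in $\C$, so $f$ is epi. Thus, if $h,h'\colon B \to C$ both satisfy $g = h \circ f = h' \circ f$, the right-cancellation property of the epimorphism $f$ forces $h = h'$. This settles uniqueness at once.

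For the second claim, I would apply $\F$ to the factorization $g = h \circ f$. Functoriality yields $\F(g) = \F(h) \circ \F(f)$ in $\D$. Since $g$ is an $\F$-epimorphism, $\F(g)$ is an epimorphism in $\D$. The key step is then the standard observation that whenever a composite $\phi \circ \psi$ is an epimorphism, its left factor $\phi$ is itself an epimorphism: if $\alpha \circ \phi = \beta \circ \phi$, then precomposing with $\psi$ and using that $\phi \circ \psi$ is epi gives $\alpha = \beta$. Applying this with $\phi = \F(h)$ and $\psi = \F(f)$ shows that $\F(h)$ is an epimorphism, that is, $h$ is an $\F$-epimorphism.

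I do not expect a genuine obstacle here, as the argument is entirely formal. The only point requiring care is to invoke the left-factor cancellation lemma in the correct direction: the composite $\F(g) = \F(h) \circ \F(f)$ has $\F(h)$ as its outer (left) factor, which is precisely the factor that inherits the epimorphism property. One should also keep in mind that it is faithfulness of $\F$ that supplies the epimorphism status of $f$ needed for the uniqueness half of the argument.
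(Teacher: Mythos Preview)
Your proposal is correct and follows essentially the same approach as the paper: both arguments obtain uniqueness by cancelling $f$ on the right (you invoke that $f$ is epi in $\C$ via faithfulness, while the paper applies $\F$ and then uses faithfulness, which amounts to the same thing), and both obtain that $\F(h)$ is epi from the factorization $\F(g)=\F(h)\circ\F(f)$ via the left-factor-of-an-epi lemma. No substantive difference.
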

\begin{proof}
Suppose $h_1,h_2$ are such that $h_1 \circ f=g=h_2 \circ f$, then clearly $\F(h_1) \circ \F(f)=\F(h_2) \circ \F(f)$, and so since $\F(f)$ is an epimorphism, we have $\F(h_1)=\F(h_2)$. Since $\F$ is faithful, we have $h_1=h_2$. To prove that $\F(h)$ is an epimorphism, let $j_1,j_2$ be such that $j_1 \circ h=j_2 \circ h$. We thus have $j_1 \circ g=j_2 \circ g$, and so $j_1=j_2$ since $g$ is an epimorphism.
\end{proof}

Let $f,g \in \Epi_\F(A,-)$ and $f \leq g$. We will denote the unique induced morphism $h$ from Definition~\ref{def:F-quot} by the suggestive notation $g/f$. Moreover, if $f\colon A \to B$, we will denote $B$ as $A/f$. By Proposition~\ref{prop:g/f-epi}, we have $g/f \in \Epi_\F(A/f,-)$. We note that $f/f$ is the identity morphism and that if $g \leq h$, then $(h/g) \circ (g/f) = (h/f)$.

\begin{proposition}[Categorical Third Isomorphism Theorem]\label{prop:cat-third-iso}
Let $\F\colon\C \to \D$ be a faithful functor and let $A \in \Obj(\C)$. If $f,g \in \Epi_\F(A,-)$ and $f \leq g$, then
\[\frac{(A/f)}{(g/f)}=\frac{A}{g}.\]
\end{proposition}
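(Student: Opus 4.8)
The plan is to unwind the notation introduced just after Proposition~\ref{prop:g/f-epi} and to observe that the two sides of the asserted equation denote the very same object. Write $f\colon A \to B$ and $g\colon A \to C$, so that by the stated convention $A/f = B$ and $A/g = C$. The statement is an equation between quotients, and since each quotient is identified with (the isomorphism class of) the target of its representing $\F$-epimorphism, it suffices to track these targets.

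First I would confirm that the left-hand expression is even well-formed. Because $f \leq g$, Proposition~\ref{prop:g/f-epi} supplies a unique morphism $g/f$ with $g = (g/f) \circ f$, and asserts moreover that $g/f \in \Epi_\F(A/f,-)$. Thus $g/f\colon A/f \to C$ is an $\F$-epimorphism whose source is $A/f$, which is exactly what is needed for the expression $(A/f)/(g/f)$ to make sense as a quotient of $A/f$.

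The key — and essentially only — observation is that composition preserves targets. Applying the convention to the $\F$-epimorphism $g/f\colon A/f \to C$, its associated quotient object is its target, namely $C$. Since $g = (g/f) \circ f$, the morphism $g$ shares this target $C$, and the quotient $A/g$ is likewise $C$. Hence
\[
\frac{(A/f)}{(g/f)} \;=\; C \;=\; \frac{A}{g},
\]
which is the desired equality.

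The main obstacle here is genuinely modest and purely notational: the content lies entirely in verifying that $(A/f)/(g/f)$ parses correctly as a quotient, which is precisely the source-and-$\F$-epimorphism claim of Proposition~\ref{prop:g/f-epi}. Once the layered slash-notation is read carefully, the ``third isomorphism theorem'' at this level of generality is immediate, reflecting that it is essentially definitional in the categorical setting; the substantive work has already been discharged in establishing that $g/f$ is a well-defined $\F$-epimorphism.
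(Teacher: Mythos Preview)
Your argument is correct and is essentially the same as the paper's: both observe that since $g/f\colon A/f \to A/g$ is an $\F$-epimorphism (by Proposition~\ref{prop:g/f-epi}), the notational convention $X/h = \text{target of }h$ immediately gives $(A/f)/(g/f) = A/g$. The paper merely records this as a commuting triangle rather than writing it out.
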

\begin{proof}
The following diagram illustrates the proof:
\[\begin{tikzcd}
A \arrow[d,"f"'] \arrow[r, "g"] & A/g=\frac{(A/f)}{(g/f)} \\
A/f \arrow[ur,"g/f"']
\end{tikzcd}\]
\end{proof}

\begin{lemma}
Let $\F\colon\C \to \D$ be a faithful functor and let $A \in \Obj(\C)$. If $f,g \in \Epi_\F(A,-)$, then $f \sim g$ implies that $A/f \cong A/g$.
\end{lemma}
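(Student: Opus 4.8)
The plan is to exhibit the isomorphism $A/f \cong A/g$ explicitly, using the induced morphisms supplied by Proposition~\ref{prop:g/f-epi}. Since $f \sim g$ means precisely that $f \leq g$ and $g \leq f$, the two defining inequalities yield a pair of morphisms: $g/f\colon A/f \to A/g$ witnessing $f \leq g$, and $f/g\colon A/g \to A/f$ witnessing $g \leq f$. I claim these are mutually inverse, so that each is an isomorphism, which is exactly what is needed.

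To verify the claim, I would invoke the composition rule noted just before the statement, namely that $(h/g)\circ(g/f) = h/f$ whenever $f \leq g \leq h$, together with the observation that $f/f$ is the identity. Reading the chain $f \leq g \leq f$ through this rule gives $(f/g)\circ(g/f) = f/f = \id_{A/f}$, and reading the chain $g \leq f \leq g$ gives $(g/f)\circ(f/g) = g/g = \id_{A/g}$. Hence $g/f$ and $f/g$ are inverse to one another, witnessing $A/f \cong A/g$.

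Alternatively, one can sidestep the composition rule and argue directly by epi-cancellation: from $g = (g/f)\circ f$ and $f = (f/g)\circ g$ one obtains $f = (f/g)\circ(g/f)\circ f$, and since $f$ is an epimorphism (being an $\F$-epimorphism) this forces $(f/g)\circ(g/f) = \id_{A/f}$; the symmetric computation, using that $g$ is an epimorphism, gives the other identity. I expect no genuine obstacle here, as the content is essentially bookkeeping. The only point requiring a moment's care is confirming that the induced morphisms $g/f$ and $f/g$ are well-defined and unique, which is precisely what Proposition~\ref{prop:g/f-epi} guarantees.
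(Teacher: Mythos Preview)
Your proposal is correct and matches the paper's proof: the paper shows $f/g$ and $g/f$ are mutual inverses via exactly your ``alternative'' epi-cancellation argument, computing $(f/g)\circ(g/f)\circ f = (f/g)\circ g = f$ and then cancelling $f$. Your primary route through the composition rule $(h/g)\circ(g/f)=h/f$ is an equally valid packaging of the same cancellation step, since that rule is itself established by the epimorphism property.
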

\begin{proof}
The induced morphisms $f/g$ and $g/f$ are inverses of each other. To see this, notice that since $(g/f) \circ f=g$ and $(f/g) \circ g=f$, we have
\[\frac{f}{g} \circ \frac{g}{f} \circ f=\frac{f}{g} \circ g=f.\]
Thus, $(f/g) \circ (g/f)=\id_{A/f}$ since $f$ is an epimorphism. Similarly, we can show that $(g/f) \circ (f/g)=\id_{A/g}$, proving that $A/f \cong A/g$.
\end{proof}

\begin{lemma}\label{lem:cat-ord-preserving}
Let $\F\colon\C \to \D$ be a faithful functor and let $A \in \Obj(\C)$. Suppose that $f,g_1,g_2 \in \Epi_\F(A,-)$ are such that $f \leq g_1$ and $f \leq g_2$. Then:
\renewcommand{\labelenumi}{\normalfont(\roman{enumi})}
\begin{enumerate}
    \item $g_1 \leq g_2$ if and only if $g_1/f \leq g_2/f$;
    \item $g_1 \sim g_2$ if and only if $g_1/f \sim g_2/f$.
\end{enumerate}
\end{lemma}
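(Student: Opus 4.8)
The plan is to prove (i) directly by chasing the defining factorizations, and then to obtain (ii) as an immediate consequence by applying (i) in both directions. First I would record the two facts that drive everything: by Proposition~\ref{prop:g/f-epi} the induced maps $g_1/f$ and $g_2/f$ lie in $\Epi_\F(A/f,-)$, so that the relation $\leq$ between them is meaningful; and by the defining property of the quotient maps we have $(g_1/f)\circ f = g_1$ and $(g_2/f)\circ f = g_2$.

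For the forward direction of (i), suppose $g_1 \leq g_2$. Since $f \leq g_1 \leq g_2$, the composition identity noted after Proposition~\ref{prop:g/f-epi} gives $(g_2/g_1)\circ(g_1/f) = g_2/f$. Hence $g_2/f$ factors through $g_1/f$ via $k := g_2/g_1$, which is exactly $g_1/f \leq g_2/f$. For the converse, suppose $g_1/f \leq g_2/f$, so that $g_2/f = k\circ(g_1/f)$ for some $k\colon A/g_1 \to A/g_2$. Precomposing with $f$ and using the defining equations above yields
\[ g_2 = (g_2/f)\circ f = k\circ(g_1/f)\circ f = k\circ g_1, \]
so $g_2$ factors through $g_1$, i.e.\ $g_1 \leq g_2$. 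This establishes (i).

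Statement (ii) then follows formally: $g_1 \sim g_2$ means $g_1 \leq g_2$ and $g_2 \leq g_1$, and since the hypotheses $f \leq g_1$ and $f \leq g_2$ are symmetric in $g_1$ and $g_2$, part (i) applies to each inequality to give $g_1/f \leq g_2/f$ and $g_2/f \leq g_1/f$, i.e.\ $g_1/f \sim g_2/f$, and conversely. I do not expect a genuine obstacle here; the only point requiring care is the converse of (i), where one must remember that a factorization of $g_2/f$ through $g_1/f$ transports back to a factorization of $g_2$ through $g_1$ simply by precomposing with $f$ and invoking $(g_i/f)\circ f = g_i$. Note in particular that this step never uses that the witnessing morphism $k$ is itself an $\F$-epimorphism, only its existence.
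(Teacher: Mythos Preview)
Your proof is correct and follows essentially the same approach as the paper's: both reduce (ii) to (i), handle the forward direction of (i) via the factorization $(g_2/g_1)\circ(g_1/f)=g_2/f$, and handle the converse by precomposing the factorization of $g_2/f$ through $g_1/f$ with $f$ to recover $g_2 = k\circ g_1$. The only cosmetic difference is that the paper re-derives the composition identity in the forward direction by cancelling the epimorphism $f$, whereas you cite it directly from the remark following Proposition~\ref{prop:g/f-epi}.
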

\begin{proof}
The statement in~(ii) obviously follows from~(i), so we only prove~(i). Suppose that $g_1 \leq g_2$. To show that $g_1/f \leq g_2/f$, we just need to verify that
\[\begin{tikzcd}
A/f \arrow[d,"g_1/f"'] \arrow[r, "g_2/f"] & A/g_2 \\
A/g_1 \arrow[ur,"g_2/g_1"']
\end{tikzcd}\]
commutes. However we have
\[\frac{g_2}{g_1} \circ \frac{g_1}{f} \circ f=\frac{g_2}{g_1} \circ g_1=g_2=\frac{g_2}{f} \circ f,\]
and since $f$ is an epimorphism, our claim follows.

Conversely, suppose that $g_1/f \leq g_2/f$. The diagram
\[\begin{tikzcd}
A \arrow[d,"g_1"'] \arrow[r, "g_2"] & A/g_2 \\
A/g_1 \arrow[ur,"(g_2/f)/(g_1/f)"']
\end{tikzcd}\]
commutes since
\[\frac{(g_2/f)}{(g_1/f)} \circ g_1=\frac{(g_2/f)}{(g_1/f)} \circ (g_1/f) \circ f=\frac{g_2}{f} \circ f=g_2.\]
Thus, we have $g_1 \leq g_2$.
\end{proof}

We can now provide a categorical analogue for the universal algebraic correspondence theorem.

\begin{theorem}[Categorical Correspondence Theorem]\label{thm:cat-correspondence}
Let $\F\colon\C \to \D$ be a faithful functor. Let $A \in \Obj(\C)$ and let $\uparrow[f]_\sim=\{\, [g]_\sim \mid f \leq g \,\}$ be the principal filter of $\Quo_\F(A)$ generated by the quotient $[f]_\sim$. The partially ordered classes $(\uparrow[f]_\sim,\leq)$ and $(\Quo_\F(A/f),\leq)$ are isomorphic.
\end{theorem}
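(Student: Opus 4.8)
The plan is to exhibit an explicit order-isomorphism $\Phi\colon (\uparrow[f]_\sim, \leq) \to (\Quo_\F(A/f), \leq)$ and verify that it is well-defined, order-preserving, order-reflecting, and surjective. The map is essentially forced by the notation already developed: given $[g]_\sim \in\, \uparrow[f]_\sim$, so that $f \leq g$, Proposition~\ref{prop:g/f-epi} furnishes the induced $\F$-epimorphism $g/f \in \Epi_\F(A/f,-)$, and I would set $\Phi([g]_\sim) = [g/f]_\sim$.

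First I would check that $\Phi$ is well-defined. If $g_1, g_2 \geq f$ and $g_1 \sim g_2$, then Lemma~\ref{lem:cat-ord-preserving}(ii) immediately gives $g_1/f \sim g_2/f$, so the value $[g/f]_\sim$ does not depend on the chosen representative. Both the order-preservation and order-reflection of $\Phi$ then come directly from Lemma~\ref{lem:cat-ord-preserving}(i): for $g_1, g_2 \geq f$, we have $g_1 \leq g_2$ if and only if $g_1/f \leq g_2/f$. Order-reflection, together with the antisymmetry of $\leq$ on $\F$-quotients, also yields injectivity, since $\Phi([g_1]_\sim) = \Phi([g_2]_\sim)$ forces $g_1/f \sim g_2/f$, whence $g_1 \sim g_2$ by Lemma~\ref{lem:cat-ord-preserving}(ii). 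Thus the two lemmas already carry almost the entire argument.

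The only genuinely new step is surjectivity, and this is where I expect the main (if modest) obstacle to lie, since I must manufacture a preimage in $\Epi_\F(A,-)$ out of an arbitrary $\F$-quotient of $A/f$. Given $[k]_\sim \in \Quo_\F(A/f)$ with $k \in \Epi_\F(A/f,-)$, the natural candidate is $g = k \circ f$. I would verify that $g$ lies in $\Epi_\F(A,-)$ using closure of $\F$-epimorphisms under composition: applying $\F$ gives $\F(g) = \F(k) \circ \F(f)$, a composite of epimorphisms in $\D$, hence itself an epimorphism, so $g$ is an $\F$-epimorphism. By construction $g = k \circ f$ exhibits $f \leq g$, so $[g]_\sim \in\, \uparrow[f]_\sim$; moreover the uniqueness clause of Proposition~\ref{prop:g/f-epi} identifies the induced morphism $g/f$ with $k$, since $k$ already satisfies $k \circ f = g$. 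Therefore $\Phi([g]_\sim) = [k]_\sim$, establishing surjectivity and completing the proof that $\Phi$ is an isomorphism of partially ordered classes.
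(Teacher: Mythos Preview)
Your proof is correct and follows essentially the same approach as the paper: the paper also considers the map $[g]_\sim \mapsto [g/f]_\sim$, appeals to Lemma~\ref{lem:cat-ord-preserving} to conclude it is a well-defined, strongly order-preserving injection, and establishes surjectivity by setting $g = h \circ f$ for a given $\F$-epimorphism $h\colon A/f \to B$. Your write-up is simply more detailed in spelling out the verifications (e.g., that $\F$-epimorphisms compose and that uniqueness identifies $g/f$ with $k$), which the paper leaves implicit.
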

\begin{proof}
Consider the map $[g]_\sim \mapsto [g/f]_\sim$. By Lemma~\ref{lem:cat-ord-preserving}, this map is a strongly order preserving, well-defined injection. To prove that it is surjective, suppose that $h\colon A/f \to B$ is an $\F$-epimorphism, and then show that $h \sim g/f$ for some $g \in\ \uparrow[f]_\sim$. Setting $g=h \circ f$ gives us the desired result.
\end{proof}

To end this section, we present a direct connection between the concept of $\F$-free objects and $\F$-quotients. The following definition for $\F$-free objects generalizes the definition found in~\cite[p.~55]{hungerford} which requires that $\F\colon\C \to \Set$.

\begin{definition}\label{def:free-obj}
Let $\F\colon \C \to \D$ be a faithful functor, $X$ a $\D$-object, $A$ a $\C$-object and $i\colon X \to \F(A)$ a monomorphism. We say the pair $(A,i)$ is \emph{$\F$-free over $X$} if, for any object $B \in \Obj(\C)$ and morphism $f\colon X \to \F(B)$, there exists a unique morphism $\varphi\colon A \to B$ in $\C$ such that the following commutes:
\[\begin{tikzcd}
    X \arrow[hookrightarrow,r,"i"] \arrow[dr,"f"'] & \F(A) \arrow[d,"\F(\varphi)"] \\
    & \F(B)
\end{tikzcd}\]
\end{definition}

\begin{definition}
    With respect to a faithful functor $\F\colon \C \to \D$, a category $\C$ \emph{has $\F$-free objects} if for every $X \in \Obj(\D)$, there is a pair $(A,i)$ which is $\F$-free over $X$.
\end{definition}

Certain types of concrete categories $\C$ arising from algebra always have free objects with respect to their forgetful functors $\F\colon \C \to \Set$. Famously, any non-trivial variety of algebras always has free objects~\cite[p.~170]{cohn}.

\begin{proposition}
    Suppose $\C$ has $\F$-free objects for a faithful functor $\F\colon \C \to \D$. Then, every $\C$-object $K$ is realizable as an $\F$-quotient of an $\F$-free object. That is, we can find a pair $(A,i)$, which is $\F$-free over some $X \in \Obj(\D)$, and an $\F$-epimorphism $\varphi$ such that $K=A/\varphi$.
\end{proposition}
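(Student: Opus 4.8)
The plan is to take the underlying $\D$-object of $K$ itself as the index object $X$, and then feed the identity morphism into the universal property of the $\F$-free object. Concretely, I would set $X = \F(K)$. Since $\C$ has $\F$-free objects, there is by hypothesis a pair $(A,i)$ that is $\F$-free over $X = \F(K)$, so in particular $i\colon \F(K) \to \F(A)$ is a monomorphism in $\D$.

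Next I would invoke Definition~\ref{def:free-obj} with $B = K$ and the morphism $f = \id_{\F(K)}\colon \F(K) \to \F(K)$. This yields a unique morphism $\varphi\colon A \to K$ in $\C$ satisfying $\F(\varphi) \circ i = \id_{\F(K)}$. The purpose of feeding in the identity is precisely that it exhibits $i$ as a right inverse of $\F(\varphi)$.

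It then remains only to check that $\varphi$ is an $\F$-epimorphism, which is immediate: any morphism admitting a right inverse is a (split) epimorphism, and the relation $\F(\varphi) \circ i = \id_{\F(K)}$ displays $\F(\varphi)$ as such in $\D$. Hence $\F(\varphi)$ is an epimorphism, i.e.\ $\varphi \in \Epi_\F(A,-)$. By the convention introduced after Proposition~\ref{prop:g/f-epi} that $A/\varphi$ denotes the target of $\varphi$, we conclude $K = A/\varphi$, realizing $K$ as an $\F$-quotient of the $\F$-free object $A$.

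I do not anticipate a genuine obstacle: the entire content lies in the initial choice $X = \F(K)$ together with the observation that applying the universal property to the identity produces a section $i$ of $\F(\varphi)$. The sole routine verification is the standard fact that a morphism with a right inverse is epic, and the conclusion $K = A/\varphi$ is then just an unwinding of the notation $A/\varphi$.
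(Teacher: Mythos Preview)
Your proposal is correct and follows essentially the same route as the paper: take $X=\F(K)$, apply the universal property of the $\F$-free object $(A,i)$ to $\id_{\F(K)}$ to obtain $\varphi$, and conclude that $\F(\varphi)$ is epic because $i$ is a right inverse. The only cosmetic difference is that the paper phrases the last step as ``$\id_{\F(K)}$ is an epimorphism'' rather than ``$\F(\varphi)$ is split epi,'' which amounts to the same observation.
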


\begin{proof}
Let $(A,i)$ be the free object over $\F(K)$. By definition, $A$ satisfies the universal property mentioned in Definition~\ref{def:free-obj}. Applying this property when $B=K$ and $f=\id_{\F(K)}$, we get the existence of $\varphi\colon A \to K$ such that
\[\begin{tikzcd}
    \F(K) \arrow[hookrightarrow,r,"i"] \arrow[dr,"\id_{\F(K)}"'] & \F(A) \arrow[d,"\F(\varphi)"] \\
    & \F(K)
\end{tikzcd}\]
The fact that $\F(\varphi)$ is an epimorphism follows trivially from the fact that $\id_{\F(K)}$ is an epimorphism, and so we have $K=A/\varphi$.
\end{proof}

\section{Quotients in elementary classes}\label{sec:elem-classes}

In this section, we assume some familiarity with elementary model theory. Let us fix a first-order language $\Ll$. A particularly important concept that will be used throughout this paper is the notion of a strong homomorphism between $\Ll$-structures.

\begin{definition}[{\cite[p.~24]{manzano}}]
Let $\M$ and $\N$ be $\Ll$-structures. A \emph{strong homomorphism} from $\M$ to $\N$ is a map $f\colon\M \to \N$ such that for all $n \in \mathbb{N}$ and $x_1,\dots,x_n \in M$, where $M$ is the universe of $\M$:
\begin{enumerate}
    \item For every $n$-ary function symbol $F \in \Ll$, we have
    \[f(F^\M(x_1,\dots,x_n))= F^\N(f(x_1),\dots,f(x_n));\]
    \item For every $n$-ary relation symbol $R \in \Ll$, we have
    \[R^\M(x_1,\dots,x_n) \iff R^\N(f(x_1),\dots,f(x_n));\]
\end{enumerate}
\end{definition}

The $\Ll$-structures and strong homomorphisms form a concrete category $\LStr$ under the forgetful functor $\F\colon\LStr \to \Set$ mapping every $\Ll$-structure to its universe. Setting $\Ll=\{c\}$ where $c$ is a constant symbol (or equivalently, a $0$-ary function symbol) as an example, we obtain a category isomorphic to the category of pointed sets $\pSet$.

If we also have an $\Ll$-theory $\T$, then the $\T$-models and strong homomorphisms form a full subcategory $\TMdl$ of $\LStr$ whose objects form an \emph{elementary class axiomatized by $\T$}. For example, if $\Ll=\{+,-,0\}$ (where $-$ denotes the unary negation symbol) and $\T$ consists of the abelian group axioms, then $\TMdl$ is the category $\mathbf{Ab}$. On the other hand, if $\T=\emptyset$, then $\TMdl$ and $\LStr$ coincide. We note that in $\TMdl$ and $\LStr$, bijective strong homomorphisms are precisely isomorphisms.

Throughout the rest of this paper, the universe (or underlying set) of an $\Ll$-structure $\M$ will be denoted as $M$. In addition, the universe of the $\F$-quotient $\M/f$ will be denoted as $M/f$.

In~\cite{barrett}, Barrett introduces the idea of a logical quotient in the context of first-order model theory. This idea generalizes the notion of quotients in universal algebra.

\begin{definition}[\cite{barrett}]
Let $\M$ be an $\Ll$-structure. An equivalence relation $\theta$ on $M$ is a \emph{congruence} on $\M$ if for all $n \in \mathbb{N}$ and $(x_i,y_i) \in \theta$, $i \leq n$, we have:
\begin{enumerate}
    \item For every $n$-ary function symbol $F \in \Ll$,
    \[(F^\M(x_1,\dots,x_n),F^\M(y_1,\dots,y_n)) \in \theta;\]
    \item For every $n$-ary relation symbol $R \in \Ll$,
    \[R^\M(x_1,\dots,x_n) \iff R^\M(y_1,\dots,y_n).\]
\end{enumerate}
The set of congruences on $\M$ is denoted as $\Con(\M)$.
\end{definition}

\begin{remark}
This definition of congruence on $\Ll$-structures is not compatible with the notion of (strong) congruence on hyperalgebras (see~\cite{ameri}), where in this case every $n$-ary hyperoperation is considered as an $(n+1)$-ary relation. This definition is instead motivated by the desire to construct the $\F$-quotients in the concrete category $\LStr$ in a natural way. Theorem~\ref{thm:quo-iso-con}, for example, gives a direct link between $\Quo_\F(\M)$ and $\Con(\M)$, where $\F\colon\LStr \to \Set$ is the forgetful functor.
\end{remark}

It is known that the lattice $\Eq(M)$ of equivalence relations on $M$, ordered by inclusion, is complete. Furthermore, for $\theta_i \in \Eq(M)$, $i \in I$, we have
\[\bigwedge_{i \in I} \theta_i = \bigcap_{i \in I} \theta_i\]
and
\[\bigvee_{i \in I} \theta_i = \bigcup \{\, \theta_{i_1} \circ \dots \circ \theta_{i_k} \mid i_1,\dots,i_k \in I \,\}\]
where $\circ$ denotes the composition of binary relations.

\begin{proposition}
For any $\Ll$-structure $\M$, $(\Con(\M),\subseteq)$ is a complete sublattice of $(\Eq(M),\subseteq)$.
\end{proposition}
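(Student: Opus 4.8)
The plan is to verify that $\Con(\M)$ is closed under the meet and join operations of $(\Eq(M),\subseteq)$ as computed by the two displayed formulas preceding the statement; since those formulas describe arbitrary meets and joins in $\Eq(M)$, this closure is exactly what is required for $\Con(\M)$ to be a complete sublattice. The argument therefore splits into a meet part and a join part, which I would treat separately.

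For meets, take a family $\{\theta_i\}_{i\in I}$ of congruences; its meet in $\Eq(M)$ is $\bigcap_{i\in I}\theta_i$, which is again an equivalence relation. To see that it is a congruence, suppose $(x_j,y_j)\in\bigcap_{i}\theta_i$ for all $j\le n$. For an $n$-ary function symbol $F$, each $\theta_i$ is a congruence, so $(F^\M(x_1,\dots,x_n),F^\M(y_1,\dots,y_n))\in\theta_i$ for every $i$, whence the pair lies in the intersection; for an $n$-ary relation symbol $R$, fixing any single index $i$ gives $R^\M(x_1,\dots,x_n)\iff R^\M(y_1,\dots,y_n)$. This part is routine.

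The substantive part is closure under joins. Write $\theta=\bigvee_{i\in I}\theta_i$, so by the join formula a pair $(x,y)$ lies in $\theta$ exactly when $x$ and $y$ are linked by a finite chain of single $\theta_i$-steps. The key idea is that both congruence conditions can be checked one coordinate and one step at a time. Given $(x_l,y_l)\in\theta$ for $l\le n$, I would interpolate through the tuples obtained by replacing $x_l$ with $y_l$ one coordinate at a time, and within each coordinate move along its connecting chain. A single such step replaces one argument $w$ by $w'$ with $(w,w')\in\theta_i$ while leaving the remaining arguments fixed (hence related to themselves by reflexivity of $\theta_i$); applying the congruence property of $\theta_i$ then places the two values of $F^\M$ into $\theta_i\subseteq\theta$, and for a relation symbol it yields a biconditional between the two instances of $R^\M$. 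Chaining these single steps --- by transitivity of $\theta$ in the function case and by transitivity of $\iff$ in the relation case --- gives $(F^\M(x_1,\dots,x_n),F^\M(y_1,\dots,y_n))\in\theta$ and $R^\M(x_1,\dots,x_n)\iff R^\M(y_1,\dots,y_n)$, so $\theta$ is a congruence.

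I expect the join to be the main obstacle: the chains witnessing membership of the various coordinates in $\theta$ may have different lengths and pass through different indices $\theta_i$, so the congruence hypotheses do not apply to the composite relation all at once. The decisive move is to decouple the coordinates and reduce everything to single $\theta_i$-steps, where each hypothesis applies directly. Together with the observation that the identity relation on $M$ is visibly a congruence and serves as the least element inherited from $\Eq(M)$, these two closure properties establish that $\Con(\M)$ is a complete sublattice.
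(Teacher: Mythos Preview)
The paper states this proposition without proof, so there is no argument to compare against; your write-up is the only one on the table. Your treatment of nonempty meets is correct and routine, and your join argument---reducing to single-coordinate, single-$\theta_i$ steps via reflexivity and then chaining by transitivity (of $\theta$ for function symbols, of $\iff$ for relation symbols)---is exactly the standard proof and is correctly executed.

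There is one genuine subtlety you should flag. You handle the empty join by noting that the diagonal is a congruence, but you do not discuss the empty meet, whose value in $\Eq(M)$ is the full relation $M^2$. Under the paper's definition of congruence, condition~(2) forces $R^\M(x_1,\dots,x_n)\iff R^\M(y_1,\dots,y_n)$ for \emph{all} tuples once $(x_j,y_j)\in M^2$; hence $M^2\in\Con(\M)$ only when every $R^\M$ is either empty or all of $M^n$. So if ``complete sublattice'' is read as closure under \emph{all} (including empty) meets and joins of $\Eq(M)$, the statement fails for structures with nontrivial relation symbols. Your argument does establish closure under all nonempty meets and joins, which is enough to conclude that $\Con(\M)$ is itself a complete lattice (its top being the join of all congruences, generally strictly below $M^2$) and that nonempty infima and suprema computed in $\Con(\M)$ agree with those in $\Eq(M)$. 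You should either state explicitly that this weaker reading is what is being proved, or note the caveat about the top element; as written, the final sentence overstates what has been shown.
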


We can naturally define quotients of first-order structures using congruences:

\begin{definition}[\cite{barrett}]
Let $\theta$ be a congruence on an $\Ll$-structure $\M$. The \emph{quotient of $\M$ by $\theta$} is defined as the $\Ll$-structure $\M/\theta$ such that:
\begin{enumerate}
    \item The universe of $\M/\theta$ is $M/\theta=\{\, [x]_\theta \mid x \in M \,\}$;
    \item For every $n$-ary function symbol $F \in \Ll$, we have
    \[F^{\M/\theta}([x_1]_\theta,\dots,[x_n]_\theta)= [F^\M(x_1,\dots,x_n)]_\theta;\]
    \item For every $n$-ary relation symbol $R \in \Ll$, we have
    \[R^{\M/\theta}([x_1]_\theta,\dots,[x_n]_\theta) \iff R^\M(x_1,\dots,x_n).\]
\end{enumerate}
\end{definition}

The notation $\M/\theta$ for a quotient generated by a congruence and $\M/f$ for an $\F$-quotient can sometimes be in conflict. To avoid confusion, we shall consistently use lowercase Greek letters such as $\theta$ and $\psi$ to denote congruences and lowercase letters such as $f$ and $g$ to denote $\F$-epimorphisms.

Given a non-abelian group $G$, the quotient $G/G$ is abelian. This implies that the elementary class of non-abelian groups is not closed under quotients. However, we can give a sufficient condition for the class of $\T$-models to be closed under quotients by giving some requirements for $\T$ to fulfill.

Recall that an \emph{atomic formula} is an $\Ll$-formula of the form $R(t_1,\dots,t_n)$ or $s=t$, where $s,t,t_1,\dots,t_n$ are $\Ll$-terms. A \emph{literal} is an atomic formula or the negation of one (i.e., of the form $R(t_1,\dots,t_n)$, $\lnot R(t_1,\dots,t_n)$, $s=t$ or $\lnot(s=t)$).

\begin{definition}[\cite{chang}]
An $\Ll$-formula $\varphi(x_1,\dots,x_n)$ is in \emph{prenex conjunctive normal form} (PCNF) if it is of the form
\[Q_1y_1\dots Q_my_m\ (\psi_{11} \vee \dots\vee \psi_{1n_1}) \wedge \dots \wedge (\psi_{k1} \vee \dots\vee \psi_{kn_k})\]
where for every $1 \leq i \leq m$, $Q_i$ is either the $\forall$ or $\exists$ symbol, and each $\psi_{ij}$ is a literal with free variables in $x_1,\dots,x_n,y_1,\dots,y_m$.
\end{definition}

\begin{remark}\label{rmk:anti-ineq}
It should be noted that every $\Ll$-formula is equivalent to one in PCNF (see~\cite{chang}). Moreover, Barrett~\cite{barrett} showed that formulae written in PCNF without literals of the form $\lnot(s=t)$ have their truth preserved under quotients. Thus, if an elementary class can be axiomatized by formulae of this form, then it is closed under quotients. Varieties of algebras, e.g., groups and rings, provide examples of such classes.
\end{remark}

\begin{example}
An MI-monoid~\cite{holcapek} (MI stands for ``Many Identities'') can be defined as the structure $\G$ with universe $G$ and language $\Ll=\{\circ,E\}$, where $\circ$ is a binary operation and $E$ is a unary relation, such that:
\begin{enumerate}
    \item $\forall x \forall y \forall z\ (x \circ y) \circ z=x \circ (y \circ z)$;
    \item $\exists e \forall x\ (\lnot E(e) \vee (x \circ e=x)) \wedge (\lnot E(e) \vee (e \circ x=x))$;
    \item $\forall a \forall b\ (\lnot E(a) \vee \lnot E(b) \vee E(a \circ b))$;
    \item $\forall x \forall a\ (\lnot E(a) \vee (x \circ a=a \circ x))$.
\end{enumerate}
We say that $e$ is a \emph{pseudoidentity} of $\G$ if $E(e)$ is true. Since this axiomatization includes no $\lnot (s=t)$ literals, it follows from the above discussion that the class of MI-monoids is closed under quotients.
\end{example}

\begin{definition}
Let $f\colon\M \to \N$ be a strong homomorphism between $\Ll$-structures. The \emph{kernel} of $f$ is defined as
\[\ker f=\{\, (x,y) \in M^2 \mid f(x)=f(y) \,\}.\]
\end{definition}

\begin{definition}
Let $\M$ be an $\Ll$-structure and let $\theta \in \Con(\M)$. The \emph{quotient map} $\pi_\theta\colon\M \to \M/\theta$ is defined as $\pi_\theta(x)=[x]_\theta$.
\end{definition}

It is straightforward to show that $\ker f$ is a congruence and that $\pi_\theta$ is a surjective strong homomorphism. The existence of $\pi_\theta$ leads to the fact that every congruence on $\M$ is the kernel of some surjective strong homomorphism from $\M$.

\begin{proposition}\label{prop:theta=kerpi}
If $\M$ is an $\Ll$-structure and $\theta$ is a congruence on $\M$, then $\theta=\ker \pi_\theta$.
\end{proposition}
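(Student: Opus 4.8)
The plan is to verify the set equality $\theta = \ker\pi_\theta$ by chasing definitions, since both sides are subsets of $M^2$. First I would expand the right-hand side: by definition of the kernel, a pair $(x,y) \in M^2$ lies in $\ker\pi_\theta$ exactly when $\pi_\theta(x) = \pi_\theta(y)$, and by the definition of the quotient map this says precisely that $[x]_\theta = [y]_\theta$. Thus the proposition reduces to the equivalence
\[
[x]_\theta = [y]_\theta \iff (x,y) \in \theta .
\]

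This last equivalence is the standard fact relating equality of equivalence classes to membership in the underlying relation, and it uses only that $\theta$ is an equivalence relation, which holds because every congruence is by definition reflexive, symmetric and transitive. For the forward direction, reflexivity gives $x \in [x]_\theta$, so if $[x]_\theta = [y]_\theta$ then $x \in [y]_\theta$, which together with symmetry yields $(x,y) \in \theta$. For the converse, if $(x,y) \in \theta$ then symmetry and transitivity give the double inclusion $[x]_\theta \subseteq [y]_\theta$ and $[y]_\theta \subseteq [x]_\theta$, whence the two classes coincide. There is no genuine obstacle here: the statement is essentially a definitional unwinding, and the only point worth flagging is that we invoke the equivalence-relation axioms on $\theta$ alone, while the function- and relation-symbol compatibility clauses in the definition of a congruence play no role in this particular result.
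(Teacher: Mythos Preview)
Your proof is correct: the paper does not supply a proof for this proposition at all, treating it as an immediate consequence of the definitions, and your definitional unwinding is exactly the argument one would expect. Your observation that only the equivalence-relation axioms are used (not the function- or relation-compatibility clauses) is accurate and worth noting.
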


We now connect the notion of $\F$-quotients from the Section~\ref{sec:F-quot} and the notion of quotients of $\Ll$-structures from this section.

\begin{lemma}\label{lem:ord-preserving}
Let $\M$ be an $\Ll$-structure and let $\F\colon\LStr \to \Set$ be the forgetful functor. Suppose that $f,g \in \Epi_\F(\M,-)$. Then:
\renewcommand{\labelenumi}{\normalfont(\roman{enumi})}
\begin{enumerate}
    \item $f \leq g$ if and only if $\ker f \subseteq \ker g$;
    \item $f \sim g$ if and only if $\ker f=\ker g$.
\end{enumerate}
\end{lemma}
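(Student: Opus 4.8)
The plan is to prove (i) directly and obtain (ii) as an immediate corollary: since $f \sim g$ unpacks to $f \leq g$ and $g \leq f$, while $\ker f = \ker g$ unpacks to $\ker f \subseteq \ker g$ and $\ker g \subseteq \ker f$, applying (i) twice yields (ii). Before starting, I would record the translation that makes everything concrete: because $\F$ is the forgetful functor into $\Set$ and the epimorphisms of $\Set$ are exactly the surjections, an $\F$-epimorphism is precisely a \emph{surjective} strong homomorphism. Writing $f\colon \M \to \M/f$ and $g\colon \M \to \M/g$, the surjectivity of $f$ is the hypothesis that will let me define the witnessing morphism below.

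The forward direction of (i) is routine. Suppose $f \leq g$, so that $g = h \circ f$ for some strong homomorphism $h\colon \M/f \to \M/g$. If $(x,y) \in \ker f$, then $f(x) = f(y)$, hence $g(x) = h(f(x)) = h(f(y)) = g(y)$, so $(x,y) \in \ker g$; therefore $\ker f \subseteq \ker g$. For the converse, assume $\ker f \subseteq \ker g$ and construct $h$ explicitly. Since $f$ is surjective, every element of $M/f$ has the form $f(x)$ for some $x \in M$, so I may set $h(f(x)) = g(x)$. This is well-defined precisely because of the kernel inclusion: if $f(x) = f(x')$ then $(x,x') \in \ker f \subseteq \ker g$, giving $g(x) = g(x')$. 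By construction $h \circ f = g$, so once $h$ is shown to be a strong homomorphism we conclude $f \leq g$.

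The one step needing genuine care is verifying that $h$ is a strong homomorphism, and this is where I expect the only real (if mild) obstacle, since strong homomorphisms must preserve relations \emph{biconditionally} rather than just in one direction. For an $n$-ary function symbol $F$, I would write arbitrary arguments of $M/f$ as $f(x_1),\dots,f(x_n)$ and compute, using that $f$ and $g$ are strong and that $h \circ f = g$,
\[
h\bigl(F^{\M/f}(f(x_1),\dots,f(x_n))\bigr) = h\bigl(f(F^\M(x_1,\dots,x_n))\bigr) = g(F^\M(x_1,\dots,x_n)) = F^{\M/g}(h(f(x_1)),\dots,h(f(x_n))),
\]
which is exactly the required identity. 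For an $n$-ary relation symbol $R$, the biconditional preservation for $f$ and for $g$ chains together: with the same representatives,
\[
R^{\M/f}(f(x_1),\dots,f(x_n)) \iff R^\M(x_1,\dots,x_n) \iff R^{\M/g}(g(x_1),\dots,g(x_n)) = R^{\M/g}(h(f(x_1)),\dots,h(f(x_n))).
\]
Note that no separate independence-of-representatives check is needed here, since each displayed equivalence is asserted for the fixed tuple and the chain holds for any choice of preimages. This completes the verification that $h$ is a strong homomorphism, establishing the converse of (i), and hence both statements.
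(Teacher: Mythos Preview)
Your proof is correct and follows essentially the same approach as the paper: reduce (ii) to (i), handle the forward direction of (i) by composing with the witnessing morphism, and for the converse define $h(f(x))=g(x)$ using surjectivity of $f$ and the kernel inclusion, then verify that $h$ is a strong homomorphism by checking function and relation symbols via the same chain of equalities and biconditionals. The only cosmetic difference is that the paper names the witnessing morphism $g/f$ in the forward direction, whereas you call it $h$.
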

\begin{proof}
Since~(ii) is easily obtained from~(i), we only give a proof of~(i). Suppose that $f \leq g$ and that $(x,y) \in \ker f$. Since $f(x)=f(y)$, we then have
\[g(x)=((g/f) \circ f)(x)=((g/f) \circ f)(y)=g(y),\]
and thus $(x,y) \in \ker g$. Conversely, suppose that $\ker f \subseteq \ker g$. Set $h\colon\M/f \to \M/g$ to be the map $h(y)=g(x)$, where $x$ is any element of $M$ such that $f(x)=y$. From the fact that $f$ is surjective and that $\ker f \subseteq \ker g$, it is clear that $h$ is well-defined. Moreover, we can easily see that $g=h \circ f$. Now let $y_1,\dots,y_n \in M/f$ and $F,R \in \Ll$ be $n$-ary. Suppose that $f(x_i)=y_i$ for $i \leq n$. The map $h$ preserves the function $F$ since
\begin{align*}
h(F^{\M/f}(y_1,\dots,y_n))&=h(F^{\M/f}(f(x_1),\dots,f(x_n)))\\&=(h \circ f)(F^\M(x_1,\dots,x_n))\\&=g(F^\M(x_1,\dots,x_n))\\&=F^{\M/g}(g(x_1),\dots,g(x_n))\\&=F^{\M/g}(h(y_1),\dots,h(y_n)).
\end{align*}
Moreover,
\begin{align*}
R^{\M/f}(y_1,\dots,y_n) &\iff R^{\M/f}(f(x_1),\dots,f(x_n)) \\&\iff R^\M(x_1,\dots,x_n) \\&\iff R^{\M/g}(g(x_1),\dots,g(x_n)) \\&\iff R^{\M/g}(h(y_1),\dots,h(y_n)),
\end{align*}
which implies that $h$ also preserves the relation $R$. Thus, $h$ is a strong homomorphism and so we have $f \leq g$.
\end{proof}

\begin{theorem}\label{thm:quo-iso-con}
Let $\M$ be an $\Ll$-structure and let $\F\colon\LStr \to \Set$ be the forgetful functor. The map $(\Quo_\F(\M),\leq) \to (\Con(\M),\subseteq)$, $[f]_\sim \mapsto \ker f$ defines a lattice isomorphism.
\end{theorem}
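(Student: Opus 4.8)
The plan is to show that the stated map, which I will denote $\Phi\colon [f]_\sim \mapsto \ker f$, is an order isomorphism between $(\Quo_\F(\M),\leq)$ and $(\Con(\M),\subseteq)$, and then to upgrade this to a lattice isomorphism by transporting the (complete) lattice structure of $\Con(\M)$ along $\Phi$. Almost all of the analytic content has already been isolated in Lemma~\ref{lem:ord-preserving}, so the argument is largely a matter of assembling it.

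First I would check that $\Phi$ is a well-defined bijection. Well-definedness and injectivity both fall straight out of Lemma~\ref{lem:ord-preserving}(ii): since $f \sim g$ if and only if $\ker f = \ker g$, the kernel $\ker f$ depends only on the class $[f]_\sim$, and distinct classes are sent to distinct congruences. For surjectivity, take any $\theta \in \Con(\M)$ and consider the quotient map $\pi_\theta\colon \M \to \M/\theta$. It is a surjective strong homomorphism, and since $\F$ is the forgetful functor to $\Set$, its image $\F(\pi_\theta)$ is a surjection of sets, hence an epimorphism; thus $\pi_\theta \in \Epi_\F(\M,-)$ and $[\pi_\theta]_\sim \in \Quo_\F(\M)$. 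By Proposition~\ref{prop:theta=kerpi} we have $\ker \pi_\theta = \theta$, so $\Phi([\pi_\theta]_\sim) = \theta$, establishing surjectivity.

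Next I would verify that $\Phi$ is an order isomorphism. By the definition of $\leq$ on $\Quo_\F(\M)$ together with Lemma~\ref{lem:ord-preserving}(i), we have $[f]_\sim \leq [g]_\sim$ if and only if $f \leq g$ if and only if $\ker f \subseteq \ker g$. This single chain of equivalences says precisely that both $\Phi$ and $\Phi^{-1}$ are order-preserving, so the bijection $\Phi$ is an order isomorphism.

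Finally I would promote this order isomorphism to a lattice isomorphism. The governing observation is that an order isomorphism automatically carries existing meets and joins to meets and joins, because these are defined purely in terms of the order: for any $S \subseteq \Quo_\F(\M)$, the element $\Phi^{-1}\!\bigl(\sup \Phi(S)\bigr)$ satisfies the defining property of $\sup S$, and dually for infima. Since $(\Con(\M),\subseteq)$ is a complete lattice, this shows simultaneously that $(\Quo_\F(\M),\leq)$ is a complete lattice and that $\Phi$ preserves arbitrary (in particular binary) meets and joins, so $\Phi$ is a lattice isomorphism. I expect the only point requiring any care to be exactly this last step: one should not simply assume that $\Quo_\F(\M)$ is a lattice, but rather derive its lattice structure from that of $\Con(\M)$ via the transport-of-structure argument just described.
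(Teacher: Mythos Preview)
Your proposal is correct and follows essentially the same route as the paper: surjectivity via Proposition~\ref{prop:theta=kerpi} and the quotient map $\pi_\theta$, and well-definedness, injectivity, and order-preservation in both directions via Lemma~\ref{lem:ord-preserving}. Your explicit transport-of-structure argument for the lattice operations is a welcome addition that the paper leaves implicit, but the underlying strategy is the same.
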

\begin{proof}
This map is surjective by Proposition~\ref{prop:theta=kerpi}. The theorem then clearly follows from Lemma~\ref{lem:ord-preserving}, which ensures that the map is a strongly order-preserving, well-defined injection.
\end{proof}

\begin{proposition}\label{prop:M/f iso M/ker f}
Let $\M$ be an $\Ll$-structure and let $\F\colon\LStr \to \Set$ be the forgetful functor. Let $f \in \Epi_\F(\M,-)$. Then the map $\M/\ker f \to \M/f$ defined by $[x]_{\ker f} \mapsto f(x)$, where $x \in M$, is a well-defined isomorphism.
\end{proposition}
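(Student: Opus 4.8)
The plan is to exhibit the map in question, which I will call $\bar f\colon \M/\ker f \to \M/f$ with $\bar f([x]_{\ker f}) = f(x)$, as a bijective strong homomorphism, and then invoke the fact (noted before Proposition~\ref{prop:theta=kerpi}) that bijective strong homomorphisms in $\LStr$ are precisely the isomorphisms. Well-definedness and injectivity come together from the chain of equivalences $[x]_{\ker f} = [y]_{\ker f} \iff (x,y) \in \ker f \iff f(x) = f(y)$, so that $\bar f([x]_{\ker f}) = \bar f([y]_{\ker f})$ holds exactly when $[x]_{\ker f} = [y]_{\ker f}$. For surjectivity I would use the hypothesis that $f$ is an $\F$-epimorphism: since $\F$ is the forgetful functor, $\F(f)$ is an epimorphism in $\Set$, i.e.\ $f$ is surjective onto the universe $M/f$, so every element of $M/f$ is of the form $f(x) = \bar f([x]_{\ker f})$ for some $x \in M$.

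Next I would check that $\bar f$ preserves the $\Ll$-structure. For each $n$-ary function symbol $F$, unwinding the definition of $F^{\M/\ker f}$, applying $\bar f$, and then using that $f$ itself is a strong homomorphism gives
\[
\bar f\bigl(F^{\M/\ker f}([x_1]_{\ker f},\dots,[x_n]_{\ker f})\bigr) = f\bigl(F^\M(x_1,\dots,x_n)\bigr) = F^{\M/f}\bigl(\bar f([x_1]_{\ker f}),\dots,\bar f([x_n]_{\ker f})\bigr).
\]
For relation symbols I would chain the biconditional defining $R^{\M/\ker f}$ with the relation clause of the strong homomorphism $f$; this mirrors the computation already carried out in Lemma~\ref{lem:ord-preserving}.

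Alternatively, and more in the spirit of this paper, one can bypass the explicit structure-preservation checks by reusing the categorical machinery. The quotient map $\pi_{\ker f}$ is a surjective strong homomorphism, hence an $\F$-epimorphism, and Proposition~\ref{prop:theta=kerpi} gives $\ker \pi_{\ker f} = \ker f$. By Lemma~\ref{lem:ord-preserving}(ii) this yields $f \sim \pi_{\ker f}$, so the earlier lemma on $\sim$-equivalent $\F$-epimorphisms produces an isomorphism $\M/\pi_{\ker f} = \M/\ker f \cong \M/f$, realized by $f/\pi_{\ker f}$. It then only remains to observe that the induced morphism $f/\pi_{\ker f}$, being the unique $h$ with $h \circ \pi_{\ker f} = f$, must satisfy $h([x]_{\ker f}) = h(\pi_{\ker f}(x)) = f(x)$, which is exactly $\bar f$.

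I do not expect a genuine obstacle here, since the content is essentially the first isomorphism theorem. The only place the $\F$-epimorphism hypothesis is truly needed is in establishing surjectivity of $\bar f$, and the single step requiring a little care is the relation clause, where one must track the biconditional in both directions rather than a one-way implication.
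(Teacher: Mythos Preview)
Your first approach is essentially the paper's own proof: the paper declares the map a well-defined bijection without elaboration and then checks preservation of function and relation symbols exactly as you do, so you have simply filled in the bijectivity details that the paper leaves to the reader.

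Your alternative route, via $\ker \pi_{\ker f} = \ker f$, Lemma~\ref{lem:ord-preserving}(ii), and the lemma that $f \sim g$ implies $A/f \cong A/g$, is not in the paper but is entirely valid and arguably more in keeping with the paper's philosophy of deriving model-theoretic facts from the categorical layer. The trade-off is that the direct check is self-contained and makes the explicit formula for the isomorphism immediate, whereas the categorical detour reuses existing results but requires the extra observation at the end that the abstract induced morphism $f/\pi_{\ker f}$ really is the concrete map $[x]_{\ker f} \mapsto f(x)$.
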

\begin{proof}
It is clear that the map is a well-defined bijection. Let $x_1,\dots,x_n \in M$ and $F,R \in \Ll$ be $n$-ary. We see that the map preserves $F$ by combining the fact that
\[F^{\M/\ker f}([x_1]_{\ker f},\dots,[x_n]_{\ker f})=[F^\M(x_1,\dots,x_n)]_{\ker f}\]
and
\[f(F^\M(x_1,\dots,x_n))=F^{\M/f}(f(x_1),\dots,f(x_n)).\]
The map also preserves $R$ since
\begin{align*}
R^{\M/\ker f}([x_1]_{\ker f},\dots,[x_n]_{\ker f}) &\iff R^\M(x_1,\dots,x_n) \\ &\iff R^{\M/f}(f(x_1),\dots,f(x_n)).
\end{align*}
Therefore, the map is a strong homomorphism.
\end{proof}

\section{Model-theoretic isomorphism theorems}

In this section, the isomorphism theorems for universal algebras (see~\cite{burris}) are generalized to the setting of $\F$-quotients. Many of the isomorphism theorems presented here are simpler to show due to the progress made previously in Sections 2 and 3. Fix $\Ll$-structures $\M$, $\N$. It can be verified that the image $f(\M)$ of a strong homomorphism $f\colon\M \to \N$ forms a substructure of $\N$.

\begin{theorem}[First Isomorphism Theorem]
If $f\colon\M \to \N$ is a strong homomorphism, then
\[\frac{\M}{\ker f} \cong f(\M).\]
\end{theorem}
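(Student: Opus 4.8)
The plan is to reduce the First Isomorphism Theorem to results already established in Section~\ref{sec:elem-classes}, rather than verifying the isomorphism by hand. The key observation is that $f\colon\M \to \N$ factors as a surjection onto its image followed by an inclusion. Let $\bar{f}\colon\M \to f(\M)$ denote the corestriction of $f$ to its image; since $f(\M)$ is a substructure of $\N$, this $\bar{f}$ is a strong homomorphism, and it is surjective by construction. Because the forgetful functor $\F\colon\LStr \to \Set$ sends surjective strong homomorphisms to surjections (hence to epimorphisms in $\Set$), $\bar{f}$ is an $\F$-epimorphism, i.e.\ $\bar{f} \in \Epi_\F(\M,-)$.

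First I would verify the easy point that $\ker \bar{f} = \ker f$: for $x,y \in M$ we have $f(x)=f(y)$ if and only if $\bar{f}(x)=\bar{f}(y)$, since $\bar{f}$ and $f$ agree as maps on underlying sets and only the codomain differs. With this identification in hand, the theorem follows immediately from Proposition~\ref{prop:M/f iso M/ker f}. Indeed, applying that proposition to the $\F$-epimorphism $\bar{f}$ yields a well-defined isomorphism
\[
\frac{\M}{\ker \bar{f}} \;\cong\; \M/\bar{f} \;=\; f(\M),
\]
where the last equality is just the definition $\M/\bar{f} = f(\M)$ of the target object of $\bar{f}$. Substituting $\ker \bar{f} = \ker f$ gives $\M/\ker f \cong f(\M)$, as desired.

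I expect the \emph{only} genuine content to be the preliminary step of confirming that $\bar{f}$ is a legitimate $\F$-epimorphism in $\LStr$ with the correct kernel and target. The claim that $f(\M)$ is a substructure of $\N$ has already been granted in the text preceding the statement, and checking that $\bar{f}$ is a strong homomorphism is routine: the function and relation clauses for $\bar{f}$ are inherited directly from those for $f$, using that the interpretations $F^{f(\M)}, R^{f(\M)}$ on the substructure are the restrictions of $F^\N, R^\N$. Once the corestriction is in place, there is essentially no calculation left, since all the work of building the quotient map and checking it preserves structure was done once and for all in Proposition~\ref{prop:M/f iso M/ker f}.

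The main subtlety to watch is purely one of bookkeeping: one must apply Proposition~\ref{prop:M/f iso M/ker f} to $\bar{f}$ (whose target is the substructure $f(\M)$) and not to $f$ itself (whose target $\N$ may be strictly larger and for which $f$ need not be an $\F$-epimorphism at all). Keeping the corestriction explicit avoids this pitfall and makes the identification $\M/\bar{f} = f(\M)$ transparent.
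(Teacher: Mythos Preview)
Your proposal is correct and follows essentially the same approach as the paper: corestrict $f$ to a surjection $\bar{f}\colon\M\to f(\M)$ (the paper calls it $f'$), observe $\ker\bar{f}=\ker f$, and invoke Proposition~\ref{prop:M/f iso M/ker f}. The paper's proof is simply a terser version of what you wrote.
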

\begin{proof}
Set a surjective homomorphism $f'\colon\M \to f(\M)$ as $f'(x)=f(x)$. By Proposition~\ref{prop:M/f iso M/ker f}, we have
\[\frac{\M}{\ker f}=\frac{\M}{\ker f'} \cong \frac{\M}{f'}=f(\M)\]
as desired.
\end{proof}

Suppose $K$ is an elementary class axiomatized by PCNF formulae without $\lnot (s=t)$ literals. By Remark~\ref{rmk:anti-ineq}, for any $\M,\N \in K$ and any strong homomorphism $f\colon \M \to \N$, $f(\M) \in K$ also, since it is isomorphic to $\M/\ker f$. In other words, the first isomorphism theorem easily shows that $K$ is closed under strong homomorphic images.

Suppose that $\N$ is a substructure of $\M$, and that $\theta$ is a congruence on $\M$. We can define a subset of $M$ as $N^\theta=\{\, x \in M \mid N \cap [x]_\theta \neq \emptyset \,\}$. The smallest substructure of $\M$ containing the set $N^\theta$ is denoted as $\N^\theta$.

\begin{proposition}
If $\N$ is a substructure of $\M$ and $\theta \in \Con(\M)$, then the universe of $\N^\theta$ is $N^\theta$.
\end{proposition}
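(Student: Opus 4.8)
The plan is to show that the set $N^\theta$ is already closed under every function symbol of $\Ll$, so that it is itself the universe of a substructure of $\M$; minimality of $\N^\theta$ will then force the universe of $\N^\theta$ to equal $N^\theta$. Recall that for a subset of $M$ to be the universe of a substructure it suffices that it be closed under each interpreted function symbol (the interpretations of constants being the $0$-ary case), since relations impose no closure condition and are simply inherited by restriction.

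First I would record the easy inclusion and the trivial direction. Every $x \in N$ lies in $[x]_\theta$, so $N \cap [x]_\theta \ni x$ and hence $N \subseteq N^\theta$; in particular $N^\theta$ is non-empty and contains $c^\M$ for every constant symbol $c$, since $c^\M \in N$ as $\N$ is a substructure. Moreover, because $\N^\theta$ is by definition a substructure containing the set $N^\theta$, its universe automatically contains $N^\theta$. Thus the only real content is the reverse inclusion.

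The main step is the closure of $N^\theta$ under functions. Fix an $n$-ary function symbol $F$ and elements $x_1,\dots,x_n \in N^\theta$. By definition of $N^\theta$, for each $i$ there is some $y_i \in N$ with $(x_i,y_i) \in \theta$. Since $\theta$ is a congruence, the defining clause for function symbols gives
\[(F^\M(x_1,\dots,x_n),\, F^\M(y_1,\dots,y_n)) \in \theta.\]
On the other hand, $\N$ is a substructure and so is closed under $F$, whence $F^\M(y_1,\dots,y_n) = F^\N(y_1,\dots,y_n) \in N$. Therefore $F^\M(x_1,\dots,x_n)$ is $\theta$-equivalent to an element of $N$, i.e.\ $N \cap [F^\M(x_1,\dots,x_n)]_\theta \neq \emptyset$, which is exactly the assertion that $F^\M(x_1,\dots,x_n) \in N^\theta$.

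Having shown $N^\theta$ is closed under all functions, it is the universe of a substructure of $\M$ containing $N^\theta$; by minimality of $\N^\theta$ the universe of $\N^\theta$ is contained in $N^\theta$, and combined with the inclusion noted above the two coincide. I do not expect a genuine obstacle here: the single nontrivial ingredient is the use of the congruence property to transport a function value along $\theta$, which is precisely what the function-symbol clause in the definition of a congruence supplies, and everything else is bookkeeping about substructures.
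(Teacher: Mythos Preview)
Your argument is correct and is exactly the natural one: show $N^\theta$ is closed under the interpretations of all function symbols by picking $\theta$-representatives in $N$ and using the congruence clause, then invoke minimality. The paper itself states this proposition without proof, so you are supplying precisely the routine verification the authors left to the reader; there is nothing to compare.
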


In addition, we can define the restriction of $\theta$ to a subuniverse $N$ as $\theta|_N=\theta \cap N^2$. It is not hard to show that $\theta|_N \in \Con(\N)$.

\begin{theorem}[Second Isomorphism Theorem]
If $\N$ is a substructure of $\M$ and $\theta \in \Con(\M)$, then
\[\frac{\N}{\theta|_N} \cong \frac{\N^\theta}{\theta|_{N^\theta}}.\]
\end{theorem}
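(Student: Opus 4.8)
The plan is to realize both sides as the domain-quotient and the image of a single strong homomorphism, and then invoke the First Isomorphism Theorem. First I would observe that every $x \in N$ lies in $N \cap [x]_\theta$, so $N \subseteq N^\theta$ and hence $\N$ is a substructure of $\N^\theta$; write $\iota\colon \N \to \N^\theta$ for the inclusion, which is a strong homomorphism. Since $\theta|_{N^\theta} \in \Con(\N^\theta)$, the quotient map $\pi_{\theta|_{N^\theta}}\colon \N^\theta \to \N^\theta/\theta|_{N^\theta}$ is a surjective strong homomorphism. I would then set $\psi = \pi_{\theta|_{N^\theta}} \circ \iota\colon \N \to \N^\theta/\theta|_{N^\theta}$, a strong homomorphism as a composite of such, given explicitly by $\psi(x) = [x]_{\theta|_{N^\theta}}$ for $x \in N$.

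The next step is to compute the kernel and the image of $\psi$. For the kernel, take $x, x' \in N$; since $x, x' \in N \subseteq N^\theta$, membership $(x,x') \in \theta|_{N^\theta} = \theta \cap (N^\theta)^2$ is equivalent to $(x,x') \in \theta$, which in turn is equivalent to $(x,x') \in \theta \cap N^2 = \theta|_N$. Hence $\ker \psi = \theta|_N$. For the image, I would use the defining property of $N^\theta$: any $y \in N^\theta$ satisfies $N \cap [y]_\theta \neq \emptyset$, so there is $x \in N$ with $(x,y) \in \theta$; as $x,y \in N^\theta$ this gives $[y]_{\theta|_{N^\theta}} = [x]_{\theta|_{N^\theta}} = \psi(x)$, so $\psi$ is surjective and $\psi(\N) = \N^\theta/\theta|_{N^\theta}$. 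Applying the First Isomorphism Theorem to $\psi$ then yields
\[
\frac{\N}{\theta|_N} = \frac{\N}{\ker \psi} \cong \psi(\N) = \frac{\N^\theta}{\theta|_{N^\theta}},
\]
as required.

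I expect the surjectivity of $\psi$ to be the only step carrying real content, since it is precisely where the definition of $N^\theta$ (and the earlier proposition identifying the universe of $\N^\theta$ with $N^\theta$) is used; the well-definedness, the homomorphism property, and the kernel computation are all routine once $\psi$ is written as the composite $\pi_{\theta|_{N^\theta}} \circ \iota$. A minor point to confirm along the way is that the interpretations of function and relation symbols in $\N$, $\N^\theta$ and $\M$ agree on elements of $N$, which holds because all three are substructures of $\M$; this guarantees that $\iota$, and hence $\psi$, is genuinely a strong homomorphism.
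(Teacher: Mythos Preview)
Your proof is correct and yields precisely the isomorphism the paper writes down: the map $[y]_{\theta|_N} \mapsto [y]_{\theta|_{N^\theta}}$ for $y \in N$. The only difference is packaging. The paper simply names this map and asserts it is a well-defined isomorphism, leaving the checks to the reader; you instead factor it as $\pi_{\theta|_{N^\theta}} \circ \iota$ and invoke the First Isomorphism Theorem, which absorbs the well-definedness and strong-homomorphism verifications and reduces the work to computing $\ker\psi$ and checking surjectivity. Both routes require exactly the same ingredients---the inclusion $N \subseteq N^\theta$, the equality $\theta|_{N^\theta} \cap N^2 = \theta|_N$, and the fact that every $\theta|_{N^\theta}$-class meets $N$---so the difference is organizational rather than mathematical. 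Your version has the minor advantage of making explicit where each hypothesis is used (in particular, as you note, that surjectivity is the one place the definition of $N^\theta$ is genuinely needed).
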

\begin{proof}
It is straightforward to show that the map $[y]_{\theta|_N} \mapsto [y]_{\theta|_{N^\theta}}$, $y \in N$ is a well-defined isomorphism.
\end{proof}

\begin{definition}
If $\theta \subseteq \psi$ are congruences on $\M$, then
\[\psi/\theta=\{\, ([x]_\theta,[y]_\theta) \in (M/\theta)^2 \mid (x,y) \in \psi \,\}.\]
\end{definition}

It is straightforward to confirm that $\psi/\theta \in \Con(\M/\theta)$. From the following definition, we find that $\ker(g/f)$, where $f \leq g$ are surjective strong homomorphisms, and $\ker g/\ker f$ are essentially equivalent.

\begin{proposition}\label{prop:fracker-equiv-kerfrac}
Let $\F\colon\LStr \to \Set$ be the forgetful functor and let $f,g \in \Epi_\F(\M,-)$. Suppose $\varphi\colon\M/\ker f \to \M/f$ is the isomorphism $[x]_{\ker f} \mapsto f(x)$. If $f \leq g$, then
\[\varphi^2(\ker g/\ker f)=\ker(g/f).\]
\end{proposition}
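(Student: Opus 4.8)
The plan is to unwind both sides of the claimed equality down to the level of elements of $(\M/f)^2$ and verify that a pair $(f(x), f(y))$ lies in $\varphi^2(\ker g / \ker f)$ if and only if it lies in $\ker(g/f)$. The isomorphism $\varphi$ sends $[x]_{\ker f}$ to $f(x)$, so $\varphi^2$ sends a pair $([x]_{\ker f}, [y]_{\ker f})$ to $(f(x), f(y))$; since $f$ is surjective, every element of $M/f$ is of the form $f(x)$, so pairs of this shape exhaust $(\M/f)^2$ and it suffices to characterize membership for such pairs.

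First I would expand the left-hand side. By the definition of $\ker g / \ker f$, a pair $([x]_{\ker f}, [y]_{\ker f})$ belongs to $\ker g / \ker f$ exactly when $(x,y) \in \ker g$, i.e.\ when $g(x) = g(y)$. Applying $\varphi^2$, the pair $(f(x), f(y))$ therefore lies in $\varphi^2(\ker g / \ker f)$ precisely when $g(x) = g(y)$. Next I would expand the right-hand side: $(f(x), f(y)) \in \ker(g/f)$ means $(g/f)(f(x)) = (g/f)(f(y))$. Here the key identity is $(g/f) \circ f = g$, which holds by the defining property of the induced morphism $g/f$ from Definition~\ref{def:F-quot} (this is where $f \leq g$ is used). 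Hence $(g/f)(f(x)) = g(x)$ and $(g/f)(f(y)) = g(y)$, so membership in $\ker(g/f)$ is also equivalent to $g(x) = g(y)$.

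Comparing the two characterizations shows that both sides consist of exactly the pairs $(f(x), f(y))$ with $g(x) = g(y)$, giving the desired equality $\varphi^2(\ker g / \ker f) = \ker(g/f)$. The argument is essentially a bookkeeping exercise in chasing the two definitions through the map $\varphi^2$, so I do not expect a genuine obstacle. The one point requiring mild care is well-definedness: I should confirm that the characterization of the left-hand side is independent of the representatives $x, y$ chosen for the $\ker f$-classes, which follows because $\ker f \subseteq \ker g$ (a consequence of $f \leq g$ via Lemma~\ref{lem:ord-preserving}(i)), ensuring that $g(x) = g(y)$ depends only on the classes $[x]_{\ker f}, [y]_{\ker f}$. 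With that observation in place, the two set-builder descriptions coincide termwise and the proof is complete.
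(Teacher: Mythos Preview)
Your proposal is correct and follows essentially the same approach as the paper: both arguments reduce the claim to the chain of equivalences $([x]_{\ker f},[y]_{\ker f}) \in \ker g/\ker f \iff g(x)=g(y) \iff (f(x),f(y)) \in \ker(g/f)$, using the defining identity $(g/f)\circ f = g$. You are in fact slightly more thorough than the paper, since you explicitly note surjectivity of $f$ (to cover all of $(\M/f)^2$) and the well-definedness issue via $\ker f \subseteq \ker g$, both of which the paper leaves implicit.
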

\begin{proof}
For every $x,y \in M$, we have the following chain of equivalences:
\begin{align*}
([x]_{\ker f},[y]_{\ker f}) \in \ker g/\ker f &\iff (x,y) \in \ker g\\
&\iff g(x)=g(y)\\
&\iff ((g/f) \circ f)(x)=((g/f) \circ f)(y)\\
&\iff (f(x),f(y)) \in \ker(g/f).
\end{align*}
This proves that $\varphi^2(\ker g/\ker f)=\ker(g/f)$.
\end{proof}

\begin{theorem}[Third Isomorphism Theorem]
If $\theta \subseteq \psi$ are congruences on $\M$, then
\[\frac{(\M/\theta)}{(\psi/\theta)} \cong \frac{\M}{\psi}.\]
\end{theorem}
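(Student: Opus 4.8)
The plan is to realize $\theta$ and $\psi$ as kernels of the canonical quotient maps and then transport the Categorical Third Isomorphism Theorem (Proposition~\ref{prop:cat-third-iso}) across the dictionary between congruences and $\F$-quotients developed in the previous section. Concretely, let $\F\colon\LStr\to\Set$ be the forgetful functor and set $f=\pi_\theta$ and $g=\pi_\psi$. These are surjective strong homomorphisms, hence $\F$-epimorphisms with source $\M$, and by Proposition~\ref{prop:theta=kerpi} we have $\ker f=\theta$ and $\ker g=\psi$. Since $\theta\subseteq\psi$, Lemma~\ref{lem:ord-preserving}(i) yields $f\leq g$, so the induced morphism $g/f\colon\M/\theta\to\M/\psi$ exists and is itself an $\F$-epimorphism by Proposition~\ref{prop:g/f-epi}. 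Observe that here the targets $\M/f$ and $\M/g$ of the quotient maps are literally the congruence quotients $\M/\theta$ and $\M/\psi$.

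The key step is to identify the kernel of $g/f$ with the congruence $\psi/\theta$ on $\M/\theta$. For a general $\F$-epimorphism this holds only up to the isomorphism $\varphi\colon\M/\ker f\to\M/f$ appearing in Proposition~\ref{prop:fracker-equiv-kerfrac}; however, because we have chosen $f=\pi_\theta$, the map $\varphi\colon\M/\theta\to\M/\theta$ sends $[x]_\theta\mapsto\pi_\theta(x)=[x]_\theta$ and is therefore the identity. Hence $\varphi^2$ acts as the identity on congruences, and Proposition~\ref{prop:fracker-equiv-kerfrac} collapses to $\ker(g/f)=\ker g/\ker f=\psi/\theta$.

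With this identification in hand, I would apply Proposition~\ref{prop:M/f iso M/ker f} to the $\F$-epimorphism $g/f$ with source $\M/\theta$, obtaining $(\M/\theta)/\ker(g/f)\cong(\M/\theta)/(g/f)$. The left-hand side is $(\M/\theta)/(\psi/\theta)$ by the previous paragraph, while the right-hand side $(\M/\theta)/(g/f)$ equals $\M/g=\M/\psi$ by the Categorical Third Isomorphism Theorem (Proposition~\ref{prop:cat-third-iso}). Chaining these gives $(\M/\theta)/(\psi/\theta)\cong\M/\psi$, as required.

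I expect the main obstacle to be the bookkeeping in the key step: congruence quotients and $\F$-quotients are \emph{a priori} distinct constructions living on different, albeit canonically isomorphic, universes, and one must check that $\psi/\theta$ --- a congruence on the structure $\M/\theta$ --- really \emph{is} the kernel of $g/f$, rather than merely its image under $\varphi^2$. The special choice $f=\pi_\theta$ is precisely what trivializes $\varphi$ and removes this friction; any other representative of the $\F$-quotient class would force us to carry $\varphi$ through the whole argument.
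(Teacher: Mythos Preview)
Your proof is correct and follows essentially the same route as the paper's: both realize $\theta,\psi$ as kernels of $\F$-epimorphisms, invoke Proposition~\ref{prop:fracker-equiv-kerfrac} to identify $\psi/\theta$ with $\ker(g/f)$ (up to $\varphi^2$), and then combine Proposition~\ref{prop:M/f iso M/ker f} with the Categorical Third Isomorphism Theorem. The only difference is that the paper leaves $f,g$ generic and carries the isomorphism $\varphi$ through the chain, whereas your specific choice $f=\pi_\theta$, $g=\pi_\psi$ makes $\varphi$ the identity and slightly streamlines the bookkeeping.
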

\begin{proof}
Take $\F\colon\LStr \to \Set$ to be the forgetful functor and let $f,g \in \Epi_\F(\M,-)$, $f \leq g$, be such that $\psi=\ker g$ and $\theta=\ker f$. By Proposition~\ref{prop:fracker-equiv-kerfrac}, we have
\[\frac{(\M/\theta)}{(\psi/\theta)}=\frac{(\M/\ker f)}{(\ker g/\ker f)} \cong \frac{\varphi(\M/\ker f)}{\varphi^2(\ker g/\ker f)} = \frac{(\M/f)}{\ker (g/f)}.\]
Moreover, by Proposition~\ref{prop:M/f iso M/ker f} and Proposition~\ref{prop:cat-third-iso}, we obtain
\[\frac{(\M/f)}{\ker (g/f)} \cong \frac{(\M/f)}{(g/f)}=\frac{\M}{g} \cong \frac{\M}{\ker g}=\frac{\M}{\psi},\]
which completes the proof.
\end{proof}

\begin{theorem}[Correspondence Theorem]
Let $[\theta,M^2]=\{\, \psi \mid \theta \subseteq \psi \,\}$ be the principal filter of $\Con(\M)$ generated by the congruence $\theta$. We then have that $[\theta,M^2]$ and $\Con(\M/\theta)$ are isomorphic lattices.
\end{theorem}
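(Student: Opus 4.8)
The plan is to derive this model-theoretic correspondence theorem directly from its categorical counterpart, Theorem~\ref{thm:cat-correspondence}, transporting everything through the lattice isomorphism of Theorem~\ref{thm:quo-iso-con}. Fix the forgetful functor $\F\colon\LStr\to\Set$ and choose any $f\in\Epi_\F(\M,-)$ with $\ker f=\theta$; the quotient map $\pi_\theta\colon\M\to\M/\theta$ works by Proposition~\ref{prop:theta=kerpi}, and then the target $\M/f$ is exactly $\M/\theta$. The goal is to assemble the chain
\[
[\theta,M^2]\ \cong\ \uparrow[f]_\sim\ \cong\ \Quo_\F(\M/\theta)\ \cong\ \Con(\M/\theta),
\]
where the outer two isomorphisms come from Theorem~\ref{thm:quo-iso-con} (applied to $\M$ and to $\M/\theta$, respectively) and the middle one is Theorem~\ref{thm:cat-correspondence}.

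First I would restrict the lattice isomorphism $\Phi\colon\Quo_\F(\M)\to\Con(\M)$, $[g]_\sim\mapsto\ker g$, of Theorem~\ref{thm:quo-iso-con} to the principal filter $\uparrow[f]_\sim$. By Lemma~\ref{lem:ord-preserving}(i), the relation $f\leq g$ is equivalent to $\theta=\ker f\subseteq\ker g$, so $\Phi$ carries $\uparrow[f]_\sim$ into $[\theta,M^2]$; surjectivity onto $[\theta,M^2]$ holds because every $\psi\supseteq\theta$ is realized as $\ker g$ for a suitable $g$, again by Proposition~\ref{prop:theta=kerpi}. Thus $\Phi$ restricts to an order isomorphism $\uparrow[f]_\sim\cong[\theta,M^2]$. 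The middle isomorphism $\uparrow[f]_\sim\cong\Quo_\F(\M/f)=\Quo_\F(\M/\theta)$ is precisely Theorem~\ref{thm:cat-correspondence}, and a final application of Theorem~\ref{thm:quo-iso-con}, this time to the $\Ll$-structure $\M/\theta$, gives $\Quo_\F(\M/\theta)\cong\Con(\M/\theta)$.

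Composing these three maps yields an order isomorphism $[\theta,M^2]\cong\Con(\M/\theta)$. The one point requiring care is upgrading ``order isomorphism'' to ``lattice isomorphism'', since Theorem~\ref{thm:cat-correspondence} is phrased only for partially ordered classes. However, both $[\theta,M^2]$, a complete sublattice of the complete lattice $\Con(\M)$, and $\Con(\M/\theta)$ are complete lattices, and any order isomorphism between lattices automatically preserves the order-theoretically defined meets and joins. Hence the composite is a genuine lattice isomorphism, which is the assertion of the theorem. I expect the only mild obstacle to be this bookkeeping --- verifying that $\uparrow[f]_\sim$ matches $[\theta,M^2]$ under $\Phi$ and that the poset isomorphisms respect the lattice structure --- since the substantive categorical work has already been carried out in Sections~\ref{sec:F-quot} and~\ref{sec:elem-classes}.
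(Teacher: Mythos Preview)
Your proposal is correct and follows essentially the same route as the paper: transport the categorical correspondence theorem (Theorem~\ref{thm:cat-correspondence}) through the isomorphism of Theorem~\ref{thm:quo-iso-con}, assembling the chain $[\theta,M^2]\cong\ \uparrow[f]_\sim\cong\Quo_\F(\M/f)\cong\Con(\M/f)\cong\Con(\M/\theta)$. By specifically choosing $f=\pi_\theta$ you collapse the last two steps into one (since then $\M/f=\M/\theta$ on the nose), which lets you bypass the paper's appeal to Proposition~\ref{prop:M/f iso M/ker f}; your added remark that an order isomorphism between lattices is automatically a lattice isomorphism makes explicit a point the paper leaves implicit.
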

\begin{proof}
Let $\F\colon\LStr \to \Set$ be the forgetful functor. By invoking Theorem~\ref{thm:quo-iso-con}, Proposition~\ref{prop:M/f iso M/ker f} and Theorem~\ref{thm:cat-correspondence} judiciously, we have that
\[[\theta,M^2] \cong\ \uparrow [f]_\sim \cong \Quo_\F(\M/f) \cong \Con(\M/f) \cong \Con(\M/\theta),\]
where $f \in \Epi_\F(\M,-)$ is such that $\theta=\ker f$.
\end{proof}

The proofs of the model-theoretic first and third isomorphism theorems, as well as the correspondence theorem, all used $\F$-quotients to provide a more concise argument. Mainly, we utilized Theorem~\ref{thm:quo-iso-con} and Proposition~\ref{prop:M/f iso M/ker f} as a means for us to invoke the results obtained previously in Section~\ref{sec:F-quot} (e.g., Proposition~\ref{prop:cat-third-iso} and Theorem~\ref{thm:cat-correspondence}).

This paper has explored a few applications of functor-based categorical quotients to congruence properties of first-order structures. While the scope of this paper is limited to isomorphism theorems, one could as well provide other generalizations, e.g., for the Zassenhaus lemma and the Jordan-Hölder theorem. Alternatively, one could attempt to define the notion of ``weak'' congruences which agrees with the $\F$-quotients on the concrete category of $\Ll$-structures with weak homomorphisms as morphisms.


\begin{thebibliography}{99}

\bibitem{ameri}
    Ameri, R., Rosenberg, I.G.:
    Congruences of multialgebras.
    J. Mult.-Valued Log. Soft Comput.
    \textbf{15}, 525--536 (2009)
    
\bibitem{barrett}
    Barrett, J.M.:
    A model-theoretic equivalence between ultrafilters and the nonstandard universe (2020, preprint)
    
\bibitem{borceux}
    Borceux, F.:
    Handbook of Categorical Algebra: Volume 2, Categories and Structures.
    Cambridge University Press, Cambridge (1994)
    
\bibitem{burris}
    Burris, S., Sankappanavar, H.P.:
    A Course in Universal Algebra, 2nd edn. \url{http://www.math.uwaterloo.ca/~snburris/htdocs/UALG/univ-algebra2012.pdf}
    
\bibitem{chang}
    Chang, C.C., Keisler, H.J.:
    An improved prenex normal form.
    J. Symb. Log.
    \textbf{27}, 317--326 (1962)
    
\bibitem{cohn}
    Cohn, P.M.:
    Universal Algebra.
    D. Reidel, Dordrecht (1981)
    
\bibitem{davvazring}
    Davvaz, B.:
    Isomorphism theorems of hyperrings.
    Indian J. Pure Appl. Math.
    \textbf{35}, 321--332 (2004)

\bibitem{davvazgroup}
    Davvaz, B.:
    Isomorphism theorems of polygroups.
    Bull. Malays. Math. Sci. Soc.
    \textbf{33}, 385--392 (2010)
    
\bibitem{ebrahimi}
    Ebrahimi, M.M., Karimi, A., Mahmoudi, M.:
    Quotients and isomorphism theorems of universal hyperalgebras.
    Ital. J. Pure Appl. Math.
    \textbf{18} (2005)
    
\bibitem{holcapekiso}
    Hol\v{c}apek, M., Wrublov\'a, M., \v{S}t\v{e}pni\v{c}ka, M.:
    On isomorphism theorems for MI-groups.
    In:
    Proceedings of the 8th Conference of the European Society for Fuzzy Logic and Technology (EUSFLAT-13) (Milan, 2013).
    Advances in Intelligent Systems Research, vol. 32, pp. 764--771.
    Atlantis Press, Dordrecht (2013)
    
\bibitem{holcapek}
    Hol\v{c}apek, M., \v{S}t\v{e}pni\v{c}ka, M.:
    MI-algebras: A new framework for arithmetics of (extensional) fuzzy numbers.
    Fuzzy Sets and Systems
    \textbf{257}, 102--131 (2014)
    
\bibitem{hungerford}
    Hungerford, T.W.:
    Algebra, 5th edn.
    Springer, New York (2003)
    
\bibitem{iampan}
    Iampan, A.:
    The UP-isomorphism theorems for UP-algebras.
    Discuss. Math. Gen. Algebra Appl.
    \textbf{39}, 113--123 (2019)
    
\bibitem{mac lane}
    Mac Lane, S.:
    Categories for the Working Mathematician.
    Springer, Berlin (2013)
    
\bibitem{manzano}
    Manzano, M.:
    Model Theory.
    Oxford University Press, Oxford (1999)
    
\bibitem{mousavi}
    Mousavi, S.S.:
    Free hypermodules: a categorical approach.
    Comm. Algebra
    \textbf{48}, 3184--3203 (2020)
    
\bibitem{zhan}
    Zhan, J., Davvaz, B., Shum, K.P.:
    On fuzzy isomorphism theorems of hypermodules.
    Soft Comput.
    \textbf{11}, 1053--1057 (2007)
    
\end{thebibliography}
\end{document}